\newlength\aftertitskip     \newlength\beforetitskip
\newlength\interauthorskip  \newlength\aftermaketitskip
\def\maketitle{\par
 \begingroup
   \def\thefootnote{\fnsymbol{footnote}}
   \def\@makefnmark{\hbox to 4pt{$^{\@thefnmark}$\hss}}
   \@maketitle \@thanks
 \endgroup
\setcounter{footnote}{0}
 \let\maketitle\relax \let\@maketitle\relax
 \gdef\@thanks{}\gdef\@author{}\gdef\@title{}\let\thanks\relax}
\def\@startauthor{\noindent \normalsize\bf}
\def\@endauthor{}
\def\@starteditor{\noindent \small {\bf Editor:~}}
\def\@endeditor{\normalsize}
\def\@maketitle{\vbox{\hsize\textwidth
 \linewidth\hsize \vskip \beforetitskip
 {\begin{center} \LARGE\@title \par \end{center}} \vskip \aftertitskip
 {\def\and{\unskip\enspace{\rm and}\enspace}%
  \def\addr{\small\it}%
  \def\email{\hfill\small\tt}%
  \def\name{\normalsize\bf}%
  \def\AND{\@endauthor\rm\hss \vskip \interauthorskip \@startauthor}
  \@startauthor \@author \@endauthor}
}}
\newtheorem{theorem}{Theorem}[section]
\newtheorem{prop}[theorem]{Proposition}
\newtheorem{corr}[theorem]{Corollary}
\theoremstyle{definition}
\numberwithin{equation}{section}
\newcommand{\nlsum}{\sum\nolimits}
\newcommand{\nlprod}{\prod\nolimits}
\newcommand{\R}{\mathbb{R}}
\newcommand{\pfrac}[2]{\left(\tfrac{#1}{#2}\right)}
\newcommand{\half}{\tfrac{1}{2}}
\newcommand{\frob}[1]{\|{#1}\|_{\text{F}}}
\newcommand{\prece}{\prec_E}
\newcommand{\preceeq}{\preceq_E}
\newcommand{\da}{\downarrow}
\DeclareMathOperator{\trace}{tr}
\begin{document}
\title{Logarithmic inequalities under an elementary symmetric polynomial dominance order}
\author{Suvrit Sra \email{suvrit@mit.edu}\\ \addr{Laboratory for Information and Decision Systems (LIDS), MIT,  Cambridge, MA 02139}}

\maketitle


\begin{abstract}
  We consider a dominance order on positive vectors induced by the elementary symmetric polynomials. Under this dominance order we provide conditions that yield simple proofs of several monotonicity questions. Notably, our approach yields a quick (4 line) proof of the so-called \emph{``sum-of-squared-logarithms''} inequality conjectured in (P.~Neff, B.~Eidel, F.~Osterbrink, and R.~Martin, \emph{Applied Math. \& Mechanics., 2013}; P.~Neff, Y.~Nakatsukasa, and A.~Fischle; \emph{SIMAX, 35, 2014}). This inequality has been the subject of several recent articles, and only recently it received a full proof, albeit via a more elaborate complex-analytic approach. We provide an elementary proof, which moreover extends to yield simple proofs of both old and new inequalities for R\'enyi entropy, subentropy, and quantum R\'enyi entropy.
\end{abstract}

\maketitle

\section{Introduction}
Let $x$ be a real vector with $n$ components. Let $e_k$ denote the $k$-th elementary symmetric polynomial defined by
\begin{equation*}
  e_k(x) := \sum_{1 \le i_1 < \cdots < i_k \le n} \prod_{j=1}^k x_{i_j}.
\end{equation*}
For nonnegative vectors  $x, y$ in $\R_+^n$, we consider the dominance order $\prece$ induced by the elementary symmetric polynomials. More precisely, we say $x \prece y$ if 
\begin{equation}
  \label{eq:2}
  e_k(x) \le e_k(y),\quad k=1,\ldots,n-1,\quad\text{and}\quad e_n(x)=e_n(y).
\end{equation}
If the last equality is just an inequality $e_n(x) \le e_n(y)$, we write $x \preceeq y$. We consider functions that are monotonic under the partial order $\prec_E$. Specifically, we say a function $F: \R_+^n \to \R$ is \emph{E-monotone} if
\begin{equation}
  \label{eq:3}
  x \prece y\quad\implies\quad F(x) \le F(y).
\end{equation}

This paper is motivated by a body of recent papers that study E-monotonicity of a specific function: the so-called \emph{``sum-of-squared-logarithms''} $L_n(x) = \sum_{i=1}^n (\log x_i)^2$. Indeed, $L_n(x)$ has been the focus of several works~\cite{neff13,neff2012,PompeNeff2015,yuji.ssli}, wherein the key open question was  establishing its E-monotonicity. The works~\cite{neff2012,neff13,PompeNeff2015} establish E-monotonicity for $n=2,3,4$;
The authors of~\cite{yuji.ssli} also highlighted the powerful implications of the general case towards solving certain nonconvex optimization problems to global optimality. Only very recently, a full solution was obtained via a complex analysis~\cite{borisov2017,ssli.mo}. While preparing this paper, it was brought to our notice~\cite{neff.private} that \cite{silhavy15} has obtained a characterization of E-monotone functions via the theory of Pick functions.\footnote{E-monotonicity of $L_n$ has additional interesting history. P.~Neff offered a reward of one ounce of fine gold for its proof, a conjecture that he also announced on the MathOverflow platform~\cite{ssli.mo}. Shortly thereafter, the first full proof was sketched by L.~Borisov using contour integration~\cite{ssli.mo}. Approximately two weeks after Borisov's proof, Šilhavý independently characterized E-monotone functions~\cite{neff.private}. His results are based on the theory of Pick functions, a natural and elegant approach to study E-monotonicity, which was in foreshadowed in the remarkable work of Josza and Mitchison~\cite{jozsa2015}.} Our work offers a complementary, and in our view, perhaps the simplest perspective, which yields a short (4 line) proof of E-monotonicity of $L_n$ as a byproduct. 



\section{E-monotonicity}
We introduce now our elementary approach, which leads to a short proof of the E-monotonicity of $L_n$ as well as similar results for related entropy and sub-entropy inequalities of~\cite{jozsa2015}. Our proof technique should generalize to monotonicity induced by other symmetric polynomials (e.g., Schur polynomials); we leave such an exploration to the interested reader.

Our main result is the following simple, albeit powerful sufficient condition:
\begin{prop}
  \label{prop:psi}
  Let $\psi$ be real-valued function admitting the representation $$\psi(s)=\int_0^a\log(t+s)d\mu(t),\quad\text{or}\quad \psi(s) = \int_0^a\log(1+t s)d\mu(t),$$ where $a > 0$, $s \ge 0$, and $\mu$ is nonnegative measure. Then, $\sum_{i=1}^n \psi(x_i)$ is E-monotone. 
\end{prop}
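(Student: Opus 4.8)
The plan is to push the (finite) sum $\sum_{i=1}^n$ inside the integral defining $\psi$ and then recognize $\prod_{i=1}^n(t+x_i)$ as the generating polynomial of the elementary symmetric functions. For the first representation, linearity of the integral gives
\[
  \sum_{i=1}^n \psi(x_i) \;=\; \int_0^a \sum_{i=1}^n \log(t+x_i)\,d\mu(t) \;=\; \int_0^a \log\!\Big(\textstyle\prod_{i=1}^n (t+x_i)\Big)\,d\mu(t),
\]
and the identity $\prod_{i=1}^n(t+x_i) = \sum_{k=0}^n e_k(x)\,t^{n-k}$ (with the convention $e_0 \equiv 1$) rewrites the integrand in terms of the $e_k$ alone.

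Next I would invoke the hypothesis $x \prece y$. For each fixed $t \ge 0$, compare coefficient by coefficient: $e_0(x)=e_0(y)=1$, $e_k(x)\le e_k(y)$ for $1\le k\le n-1$, $e_n(x)=e_n(y)$, and every power $t^{n-k}$ is nonnegative; hence $\prod_i(t+x_i)\le \prod_i(t+y_i)$ for all $t\ge 0$ (with equality at $t=0$). Taking logarithms preserves this inequality, and integrating against the nonnegative measure $\mu$ preserves it again, so $\sum_i\psi(x_i)\le\sum_i\psi(y_i)$ --- precisely E-monotonicity. The second representation is identical with $\prod_{i=1}^n(1+tx_i)=\sum_{k=0}^n e_k(x)\,t^k$ in place of the former: the coefficient comparison is still favorable for every $t\ge 0$, and both integrands vanish at $t=0$. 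In fact the same computation shows $\sum_i\psi(x_i)$ is monotone under the weaker relation $\preceeq$ as well, since $e_k(y)-e_k(x)\ge 0$ together with $t^{n-k}\ge 0$ already suffice to compare the two products.

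I expect no real obstacle here --- the proof is the advertised four lines. The one point worth a sentence of care is finiteness: $\log(t+s)$ can be negative when $t+s<1$, so "$\mu$ nonnegative" alone does not make the defining integral obviously convergent; but this is subsumed in the assumption that $\psi$ is real-valued, and once each $\psi(x_i)$ is a finite real number, moving the $n$-term sum through the integral is pure linearity, with no appeal to Fubini--Tonelli needed.
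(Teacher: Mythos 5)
Your proof is correct and follows essentially the same route as the paper: both rest on the generating-function identity $\prod_{i=1}^n(t+x_i)=\sum_{k=0}^n e_k(x)\,t^{n-k}$ (respectively $\prod_i(1+tx_i)=\sum_k e_k(x)t^k$), the termwise coefficient comparison under $\prece$ (indeed under the weaker $\preceeq$, as the paper also implicitly uses), and then taking logarithms and integrating against the nonnegative measure $\mu$. Your closing remark on convergence is a reasonable extra note but does not change the argument.
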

\begin{proof}
  Recall first the generating functions for elementary symmetric polynomials
  \begin{align*}
    \nlsum_{k=0}^nt^ke_{k}(x) &=\nlprod_{i=1}^n(1+ t x_i),\\
    \nlsum_{k=0}^nt^ke_{n-k}(x) &= \nlprod_{i=1}^n(t+x_i).
  \end{align*}
  Let $x, y \in \R_+^n$, and suppose $x\preceeq y$. Then using the above generating function representation under this hypothesis we immediately obtain
  \begin{align}
    \label{eq:key1}
    \nlprod_{i=1}^n(1+ t x_i) &\le\nlprod_{i=1}^n(1+ty_i)\qquad\forall t \ge 0\\
    \label{eq:key2}
    \nlprod_{i=1}^n(t+x_i) &\  \le \nlprod_{i=1}^n(t+y_i)\qquad\forall t \ge 0.
  \end{align}
  Taking logarithms, multiplying by $d\mu(t)$, and integrating, it then follows that
  \begin{equation*}
    \begin{split}
      &\sum_{i=1}^n\int_0^a \log(1+tx_i)d\mu(t)
      \le 
      \sum_{i=1}^n\int_0^a \log(1+ty_i)d\mu(t),\\
      &\implies\ F(x)=\nlsum_i \psi(x_i) \le \nlsum_i \psi(y_i)=F(y).
    \end{split}
  \end{equation*}
  Similarly, with~\eqref{eq:key2} we again obtain $F(x)=\sum_i \psi(x_i) \le \sum_i \psi(y_i)=F(y)$.
\end{proof}

\noindent\textbf{Remark.} Observe that the $E$-monotonicity relation is \emph{weaker} than the usual majorization order. Indeed, if $x \prec y$ (i.e., $\sum_{i=1}^k x_i^\da \le \sum_{i=1}^k y_i^\da$ for $1\le k < n$, and $x^T1=y^T1$), then $e_k(x) \ge e_k(y)$ because $e_k$ is Schur-concave~\cite{marOlk}. 

\subsection{Proof of the SSLI}
As an immediate corollary to Prop.~\ref{prop:psi} we obtain the announced E-monotonicity of $L_n(x)=\sum_{i=1}^n(\log x_i)^2$.
\begin{corr}
  \label{cor:ssli}
  Let $x, y \in \R_+^n$ such that $x \prece y$. Then, $L_n(x) \le L_n(y)$.
\end{corr}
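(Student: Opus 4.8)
The plan is to deduce Corollary~\ref{cor:ssli} from Proposition~\ref{prop:psi} by splitting $(\log s)^2$ into two pieces, each of which is separately handled by the proposition, and by exploiting the symmetry of $\prece$ under coordinatewise inversion. Note first that $s\mapsto(\log s)^2$ cannot itself admit either representation in Proposition~\ref{prop:psi}: any $\psi$ of that form has $\psi'(s)=\int_0^a (s+t)^{-1}\,d\mu(t)\ge 0$ (and likewise for the second form), hence is nondecreasing on $\R_+$, whereas $(\log s)^2$ is not monotone. So a decomposition is genuinely needed; the one that works turns $s\mapsto s^{-1}$ against $\prece$.

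The steps I would carry out: \textbf{(1)} Fix the kernel $\psi(s):=\int_0^1 t^{-1}\log(1+ts)\,dt$. Since $\log(1+ts)\to s$ as $t\to 0^+$, this integral is finite for every $s\ge 0$ (indeed $\psi(s)=-\mathrm{Li}_2(-s)$), and $t^{-1}\,dt$ is a nonnegative ($\sigma$-finite) measure on $(0,1]$; hence Proposition~\ref{prop:psi} applies and $v\mapsto\sum_i\psi(v_i)$ is $E$-monotone. \textbf{(2)} Verify that $\prece$ is invariant under inversion: if $x\prece y$ and $\hat x:=(x_1^{-1},\dots,x_n^{-1})$, $\hat y:=(y_1^{-1},\dots,y_n^{-1})$, then $\hat x\prece\hat y$. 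This is immediate from the elementary identity $e_k(\hat x)=e_{n-k}(x)/e_n(x)$: using $e_n(x)=e_n(y)$, the inequalities $e_{n-k}(x)\le e_{n-k}(y)$ for $1\le k\le n-1$ become $e_k(\hat x)\le e_k(\hat y)$, while for $k=n$ both sides equal $1/e_n(x)=1/e_n(y)$. \textbf{(3)} Apply the $E$-monotonicity of $\sum_i\psi(\cdot)$ to the pair $(x,y)$ and to the pair $(\hat x,\hat y)$ and add the two inequalities, obtaining
\[
\sum_{i=1}^n\bigl[\psi(x_i)+\psi(1/x_i)\bigr]\ \le\ \sum_{i=1}^n\bigl[\psi(y_i)+\psi(1/y_i)\bigr].
\]
\textbf{(4)} Invoke Euler's dilogarithm reflection $\mathrm{Li}_2(-s)+\mathrm{Li}_2(-1/s)=-\tfrac{\pi^2}{6}-\tfrac12(\log s)^2$ for $s>0$, equivalently $\psi(s)+\psi(1/s)=\tfrac12(\log s)^2+\tfrac{\pi^2}{6}$; substituting, the constants $\tfrac{n\pi^2}{6}$ cancel on both sides and leave $L_n(x)\le L_n(y)$.

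The only ingredient beyond Proposition~\ref{prop:psi} and classical special-function facts is the inversion-invariance of $\prece$ in step (2), and that is the step I expect to carry the weight — though it is routine once one recalls $e_k(\hat x)=e_{n-k}(x)/e_n(x)$; with it, the argument is the promised four lines. If one prefers to avoid quoting the dilogarithm identity, step (4) can be replaced by differentiating $\Psi(s):=\psi(s)+\psi(1/s)$ under the integral sign to check $\Psi'(s)=(\log s)/s$ and evaluating $\Psi(1)=2\psi(1)=-2\mathrm{Li}_2(-1)=\pi^2/6$, which pins down $\Psi(s)=\tfrac12(\log s)^2+\tfrac{\pi^2}{6}$; one can also package the whole proof around $\Psi$, observing that $\sum_i\Psi(v_i)$ is $E$-monotone directly from steps (1)--(2).
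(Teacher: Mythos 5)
Your proof is correct, but it takes a genuinely different route from the paper's. The paper proves Corollary~\ref{cor:ssli} with the single integral identity $(\log x)^2 = \int_0^\infty\log\bigl(\tfrac{(1+tx)(t+x)}{x(1+t)^2}\bigr)\tfrac{dt}{t}$ and then integrates the pointwise inequality obtained by combining \emph{both} generating-function bounds \eqref{eq:key1} and \eqref{eq:key2} with the normalization $\sum_i\log(rx_i)=\sum_i\log(ry_i)$ coming from $e_n(x)=e_n(y)$. You instead use only the first representation in Prop.~\ref{prop:psi} (the dilogarithm kernel $\psi(s)=-\mathrm{Li}_2(-s)$ on $(0,1]$), apply it twice --- to $(x,y)$ and to the inverted pair --- via the inversion-invariance of $\prece$, and finish with the reflection formula $\psi(s)+\psi(1/s)=\tfrac12(\log s)^2+\tfrac{\pi^2}{6}$. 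These are at bottom the same mechanism: since $\prod_i(1+t/x_i)=\prod_i(t+x_i)/e_n(x)$, your inversion step $\hat x\prece\hat y$ is precisely a repackaging of \eqref{eq:key2} under the hypothesis $e_n(x)=e_n(y)$, and your kernel $\Psi(s)=\psi(s)+\psi(1/s)$ is the finite-range analogue of the paper's kernel in \eqref{eq:10} (which, after writing $\tfrac{t+x}{x}=1+\tfrac{t}{x}$, is $\log(1+tx)+\log(1+t/x)-2\log(1+t)$ integrated over all of $(0,\infty)$, whence the factor $1$ instead of your $\tfrac12$). What your version buys is that Prop.~\ref{prop:psi} is used strictly as a black box, and the lemma $e_k(\hat x)=e_{n-k}(x)/e_n(x)$ is a clean, reusable statement of why both generating functions are available; what the paper's version buys is self-containedness (no special functions, no reflection formula) and the avoidance of the convergence bookkeeping at $t=0$. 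One small remark: your measure $t^{-1}dt$ is infinite near $0$, whereas Prop.~\ref{prop:psi} is stated for a ``nonnegative measure'' on $[0,a]$; since $\log(1+ts)=O(ts)$ as $t\to0^+$ every integral you write converges absolutely, and the paper itself uses $dt/t$ on $(0,\infty)$ in its own proof, so this is within the intended scope and not a gap.
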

\begin{proof}
  The key is to rewrite $(\log x)^2$ so that Prop.~\ref{prop:psi} applies. We observe that
  \begin{equation}
    \label{eq:10}
    (\log x)^2 = \int_0^\infty\log \left(\frac{(1 + tx) \left(t+x\right)}{x(1+t)^2}\right)\frac{dt}{t}.
  \end{equation}
  Next, using inequalities~\eqref{eq:key1} and \eqref{eq:key2}, and the assumption $e_n(x)=e_n(y)$ (whereby $\sum_i\log(rx_i)=\sum_i \log(ry_i)$ for $r>0$) we obtain the inequality
  \begin{equation*}
      \nlsum_i\log(1+tx_i)(t+x_i) - \log\bigl((1+t)^2x_i\bigr)
      \le
      \nlsum_i\log(1+ty_i)(t+y_i) - \log\bigl((1+t)^2y_i\bigr).
  \end{equation*}
  Integrating this over $t$ with $d\mu(t)=\frac{dt}{t}$ and using identity~\eqref{eq:10} the proof follows. 
\end{proof}

\subsection{Entropy}
Now we consider application of Prop.~\ref{prop:psi} to obtain entropy inequalities.
Recall that for a probability vector $x$, the R\'enyi entropy of order $\alpha$, where $\alpha\ge0$ and $\alpha\neq 1$, is defined as
\begin{equation}
  \label{eq:15}
  H_\alpha(x) := \frac{1}{1-\alpha}\log\left(\nlsum_{i=1}^n x_i^\alpha\right).
\end{equation}
The limiting value $\lim_{\alpha \to 1}H_\alpha$ yields the usual (Shannon) entropy $-\sum_ix_i\log x_i$.
\begin{theorem}
  \label{thm:renyi}
  Suppose $x$ and $y$ probability vectors. Then, 
  \begin{equation*}
    x \prece y \implies H_\alpha(x) \le H_\alpha(y)\quad\text{for}\ \alpha \in [0,2].
  \end{equation*}
\end{theorem}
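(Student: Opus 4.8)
The plan is to reduce the claim to Proposition~\ref{prop:psi} by showing that for $\alpha\in[0,2]$, the map $H_\alpha$ is (up to a monotone transformation and a constant that is invariant under $\prec_E$) a sum $\sum_i\psi(x_i)$ for a suitable $\psi$ of the admissible integral form. First I would dispose of the easy endpoints: for $\alpha=1$ the Shannon entropy $-\sum_i x_i\log x_i$ can be handled directly since $t\log t$ has the integral representation $t\log t = \int_0^\infty\big(\tfrac{t-1}{1+s}-\tfrac{t}{t+s}\big)ds$ type identity (or one takes the $\alpha\to1$ limit of the general case), and for $\alpha=0$, $H_0(x)=\log n$ is constant; for $\alpha=2$ one checks the boundary case separately or by continuity. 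The substance is the two open ranges $\alpha\in[0,1)$ and $\alpha\in(1,2]$.

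Next I would treat $\alpha\in[0,1)$. Here $\tfrac{1}{1-\alpha}>0$, so $H_\alpha$ is an increasing function of $\sum_i x_i^\alpha$; since $\log$ is also increasing, it suffices to show $x\prec_E y\implies \sum_i x_i^\alpha\le\sum_i y_i^\alpha$, i.e. that $s\mapsto s^\alpha$ is E-monotone on $\R_+$. The standard integral representation $s^\alpha = c_\alpha\int_0^\infty \tfrac{s}{s+t}\,t^{\alpha-1}\,dt$ (valid for $0<\alpha<1$, with $c_\alpha=\tfrac{\sin\pi\alpha}{\pi}$) is not quite in the stated form, but rewriting $\tfrac{s}{s+t}=1-\tfrac{t}{s+t}$ and using $\tfrac{d}{ds}\log(s+t)=\tfrac1{s+t}$ lets me express $s^\alpha$ as an affine combination of a constant and an integral of $\log(s+t)$ against a nonnegative measure (after an integration by parts), which is exactly the shape Prop.~\ref{prop:psi} accepts; the constant and linear-in-$\sum x_i$ pieces are controlled because $e_1$ and $e_n$ behave correctly under $\prec_E$ (recall $e_n(x)=e_n(y)$, and $e_1(x)\le e_1(y)$). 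I should double-check the sign of the measure after the integration by parts — that is the one place where the argument could fail and is the main obstacle.

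Then for $\alpha\in(1,2]$ the sign of $\tfrac1{1-\alpha}$ flips, so now I need $x\prec_E y\implies \sum_i x_i^\alpha\ge\sum_i y_i^\alpha$, i.e. $s\mapsto -s^\alpha$ E-monotone, equivalently $s\mapsto s^\alpha$ E-\emph{anti}monotone for $1<\alpha\le2$. Writing $\beta=\alpha-1\in(0,1]$ and $s^\alpha = s\cdot s^\beta$, I would use the representation of $s^\beta$ from the previous paragraph: $s^{\alpha}=s\int_0^\infty(\cdots)$, and integrate by parts / manipulate to get $s^\alpha = \text{(const)} + \text{(linear in }s) - \int_0^\infty \log(1+ts)\,d\nu(t)$ with $\nu\ge0$; then $-s^\alpha$ has the admissible form up to the constant and linear terms, which again cause no trouble under $\prec_E$. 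The endpoint $\alpha=2$ gives $s^2$, where $-s^2 = -\,(e_1\text{-type linear term})^{\text{componentwise}}$... more carefully, $\sum_i x_i^2 = e_1(x)^2 - 2e_2(x)$, and under $x\prec_E y$ we have $e_1(x)\le e_1(y)$ but the sign of $e_1^2-2e_2$ is not obviously monotone, so here the integral-representation route (rather than an $e_k$-identity) is the safe one; this boundary case plus the integration-by-parts sign check is where I expect to spend the most care. Finally, I would remark that monotonicity of $H_\alpha$ in $\alpha$ together with E-monotonicity at $\alpha=0$ and $\alpha=2$ does \emph{not} by itself give the interior cases (since the map $\alpha\mapsto H_\alpha(x)$ is monotone in $\alpha$ for each fixed $x$ but the inequality we want is between different vectors), so the per-$\alpha$ integral representations really are needed.
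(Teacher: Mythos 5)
Your proposal follows essentially the same route as the paper: split on the sign of $1/(1-\alpha)$, represent $s^\alpha$ as an integral of $\log(1+ts)$ against the density $\tfrac{\alpha\sin(\alpha\pi)}{\pi}t^{-\alpha-1}$ (which is exactly what your integration by parts produces), subtract the linear term $ts$ for $1<\alpha<2$ and cancel it via $e_1(x)=e_1(y)$, and take an $\alpha\to1$ limit for the Shannon case. Two small fixes: for $1<\alpha<2$ the log-integral and the linear piece are not separately convergent against $t^{-\alpha-1}dt$, so you must keep the combination $\log(1+ts)-ts$ under a single integral sign (as the paper's equation for that range does) and compare $\sum_i\bigl(\log(1+tx_i)-tx_i\bigr)$ with $\sum_i\bigl(\log(1+ty_i)-ty_i\bigr)$ pointwise in $t$ before integrating; and the endpoint $\alpha=2$ needs no special care, since for probability vectors $\sum_i x_i^2=e_1(x)^2-2e_2(x)=1-2e_2(x)\ge 1-2e_2(y)=\sum_i y_i^2$.
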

\begin{proof}
  Since $\log$ is monotonic, to analyze E-monotonicity of $H_\alpha$, it suffices to consider the following three special cases:
  \begin{subequations}
    \begin{align}
      \label{eq:4}
      \nlsum_{i=1}^n x_i^\alpha &\le \nlsum_{i=1}^n y_i^\alpha,\qquad \text{if}\ 0 < \alpha < 1,\quad\text{and}\quad e_1(x)=e_1(y),\\
      \label{eq:5}
      \nlsum_{i=1}^n x_i^\alpha &\ge \nlsum_{i=1}^n y_i^\alpha,\qquad \text{if}\ 1 < \alpha < 2,\quad\text{and}\ e_1(x)=e_1(y),\\
      \label{eq:6}
      -\nlsum_{i=1}^n x_i\log x_i &\le -\nlsum_{i=1}^n y_i\log y_i,\quad\text{and}\quad e_1(x)=e_1(y).
    \end{align}
  \end{subequations}
  Observe that for $0 < \alpha < 1$ and $s\ge0$, we have the integral representation
  \begin{equation}
    \label{eq:7}
    s^\alpha = \frac{\alpha\sin(\alpha\pi)}{\pi}\int_{0}^\infty \log(1+ts)t^{-\alpha-1}dt.
  \end{equation}
  Given~\eqref{eq:7}, an application of Prop.~\ref{prop:psi} immediately yields~\eqref{eq:4}. 

  For~\eqref{eq:5}, we consider a different representation (notice the extra $ts$ term):
  \begin{equation}
    \label{eq:8}
    s^\alpha = \frac{\alpha\sin(\alpha\pi)}{\pi}\int_0^\infty\left(\log(1+ts)-ts\right)t^{-\alpha-1}dt.
    \end{equation}
    This integral converges for $1 < \alpha < 2$ and $s \ge 0$. Since $x\prece y$ and we assumed $e_1(x)=e_1(y)$, it follows that $\nlsum_i \bigl(\log(1+tx_i) - tx_i\bigr)   \le   \nlsum_i \bigl(\log(1+ty_i) - ty_i\bigr)$. Thus, using~\eqref{eq:8} and noting that $\sin(\alpha\pi) < 0$ for $1<\alpha<2$, we obtain~\eqref{eq:5}. 

    To obtain~\eqref{eq:6} we apply a limiting argument to~\eqref{eq:5}. In particular, recall that
    \begin{equation*}
      \lim_{\alpha\to 1}\frac{x_i^\alpha-x_i}{\alpha-1}=x_i\log x_i,
    \end{equation*}
    so that upon using $\sum_ix_i=\sum_iy_i$ in~\eqref{eq:5}, dividing by $\alpha-1$, and taking limits as $\alpha\to 1$, we obtain~\eqref{eq:6}.
\end{proof}



\subsection{Inequalities for positive definite matrices}
\label{sec:mtx}
We note below some inequalities on (Hermitian) positive definite matrices that follow from the above discussion. We write $A>0$ to indicate that $A$ is positive definite. We extend the definition~(\ref{eq:2}) to such matrices in the usual way. In particular, let $A, B > 0$. We say
\begin{equation}
  \label{eq:16}
  A \prec_E B\quad\Longleftrightarrow\quad \lambda(A) \prec_E \lambda(B),
\end{equation}
where $\lambda(\cdot)$ denotes the vector of eigenvalues. Recalling that $e_k(\lambda(A)) = \trace(\wedge^k A)$, where $\wedge$ is the exterior product~\cite[Ch.~1]{bhatia97}, we obtain the following result.
\begin{prop}
  \label{prop:trivial}
  Let $A, B$ be positive definite matrices. Then,
  \begin{equation*}
    \trace(\wedge^k A) \le \trace(\wedge^kB),\quad\text{for}\ k=1,\ldots,n\  \implies\log\det(I+A) \le \log\det(I+B).
  \end{equation*}  
\end{prop}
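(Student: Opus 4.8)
The plan is to reduce Proposition~\ref{prop:trivial} to Proposition~\ref{prop:psi} by translating the matrix statement into a statement about eigenvalues. First I would set $x=\lambda(A)$ and $y=\lambda(B)$, the vectors of eigenvalues of $A$ and $B$ respectively, noting $x,y\in\R_+^n$ since $A,B>0$. Using the identity $e_k(\lambda(A))=\trace(\wedge^k A)$ recalled just before the statement, the hypothesis $\trace(\wedge^k A)\le\trace(\wedge^k B)$ for $k=1,\ldots,n$ becomes exactly $e_k(x)\le e_k(y)$ for all $k=1,\ldots,n$. In particular $e_n(x)\le e_n(y)$, so this is the relation $x\preceeq y$ (the ``$\preceq_E$'' order, where the top-degree constraint is only an inequality).

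Next I would identify the target function. Since $\det(I+A)=\prod_{i=1}^n(1+x_i)$, we have $\log\det(I+A)=\sum_{i=1}^n\log(1+x_i)$, which is of the form $\sum_i\psi(x_i)$ with $\psi(s)=\log(1+s)$. This $\psi$ fits the template $\psi(s)=\int_0^a\log(1+ts)\,d\mu(t)$ of Proposition~\ref{prop:psi} in the degenerate case: take $\mu=\delta_1$ the unit point mass at $t=1$ and $a>1$, so that $\int_0^a\log(1+ts)\,d\mu(t)=\log(1+s)$. Actually it is cleaner still to invoke the proof of Proposition~\ref{prop:psi} directly: inequality~\eqref{eq:key1}, which holds under the hypothesis $x\preceeq y$, reads $\prod_i(1+tx_i)\le\prod_i(1+ty_i)$ for all $t\ge 0$; setting $t=1$ and taking logarithms gives $\sum_i\log(1+x_i)\le\sum_i\log(1+y_i)$, i.e.\ $\log\det(I+A)\le\log\det(I+B)$.

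There is essentially no obstacle here — the result is a direct specialization, which is presumably why it carries the label \texttt{prop:trivial}. The only points requiring a word of care are: (i) confirming that the chain $\{\trace(\wedge^k A)\le\trace(\wedge^k B)\}_{k=1}^n$ indeed gives $x\preceeq y$ rather than the stricter $x\prece y$ (it does, and $\preceeq$ is all we need since $\psi(s)=\log(1+s)$ uses only~\eqref{eq:key1}, not the equality $e_n(x)=e_n(y)$); and (ii) noting that the generating-function identity $\sum_{k=0}^n t^k e_k(x)=\prod_i(1+tx_i)$ evaluated at $t=1$ yields $\det(I+A)=\sum_k e_k(\lambda(A))=\prod_i(1+\lambda_i(A))$, so the determinant really is the right object. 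With these observations the proof is three lines: convert to eigenvalues, apply~\eqref{eq:key1} at $t=1$, take logs.
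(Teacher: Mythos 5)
Your proof is correct and matches the paper's (implicit) argument: the paper states Proposition~\ref{prop:trivial} as an immediate consequence of the identity $e_k(\lambda(A))=\trace(\wedge^k A)$ together with inequality~\eqref{eq:key1} (equivalently, $\det(I+A)=\sum_k e_k(\lambda(A))$), which is exactly the reduction you carry out. Your remark that only $\preceeq_E$ is needed, not the equality $e_n(x)=e_n(y)$, is a correct and worthwhile observation.
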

\noindent\textbf{Remark.} A classic result in eigenvalue majorization states that if $\log\lambda(A) \prec \log\lambda(B)$ (the usual dominance order), then we have $\log\det(I+A) \le \log\det(I+B)$. Prop.~\ref{prop:trivial} presents an \emph{alternative} condition that implies the same determinantal inequality.

\vskip5pt
Let us now state two other notable consequences of the order~\eqref{eq:16}. To that end, we recall the Riemannian distance on the manifold of positive definite matrices (see e.g.,~\cite[Ch.~6]{bhatia07}) as well as the S-Divergence~\cite{ssdiv}:
\begin{align}
  \label{eq:riem}
  \delta_R(A,B) &:= \frob{\log B^{-1/2}AB^{-1/2}},\\
  \label{eq:sdiv}
  \delta_S(A,B) &:= \log\det\pfrac{A+B}{2}-\half\log\det(AB).
\end{align}
\begin{prop}
  \label{prop:riem}
  If $A, B, C > 0$ and $AC^{-1} \prec_E BC^{-1}$, then
  \begin{align}
    \label{eq:11}
    \delta_R(A,C) \le \delta_R(B,C),\\
    \label{eq:12}
    \delta_S(A,C) \le \delta_S(B,C).
  \end{align}
\end{prop}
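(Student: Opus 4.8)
The plan is to reduce both inequalities to applications of Proposition~\ref{prop:psi} (or its corollaries) after an appropriate congruence transformation. The key observation is that the hypothesis $AC^{-1}\prec_E BC^{-1}$ should be massaged into a statement about the eigenvalues of $C^{-1/2}AC^{-1/2}$ and $C^{-1/2}BC^{-1/2}$. Indeed, $AC^{-1}$ is similar to $C^{-1/2}AC^{-1/2}$, so they have the same eigenvalues, hence the same elementary symmetric polynomials; thus $AC^{-1}\prec_E BC^{-1}$ is equivalent to $\lambda(C^{-1/2}AC^{-1/2})\prec_E \lambda(C^{-1/2}BC^{-1/2})$. Writing $A':=C^{-1/2}AC^{-1/2}$ and $B':=C^{-1/2}BC^{-1/2}$, we have reduced to: given positive definite $A'\prec_E B'$, show $\delta_R(A',I)\le\delta_R(B',I)$ and $\delta_S(A',I)\le\delta_S(B',I)$, using that $\delta_R$ and $\delta_S$ are invariant under congruence by $C^{1/2}$ (a standard property of both distances).

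First I would handle the Riemannian distance~\eqref{eq:11}. After the reduction, $\delta_R(A',I)^2=\frob{\log A'}^2=\sum_i(\log\lambda_i(A'))^2=L_n(\lambda(A'))$. Since $A'\prec_E B'$ means $\lambda(A')\prec_E\lambda(B')$, Corollary~\ref{cor:ssli} gives $L_n(\lambda(A'))\le L_n(\lambda(B'))$ directly, and taking square roots yields~\eqref{eq:11}.

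Next I would handle the S-divergence~\eqref{eq:12}. Again using congruence invariance of $\delta_S$ (which follows from $\det(XYX^T)=\det(X)^2\det(Y)$), it suffices to prove $\delta_S(A',I)\le\delta_S(B',I)$. Here $\delta_S(A',I)=\log\det\bigl(\tfrac{A'+I}{2}\bigr)-\tfrac12\log\det A' = \sum_i\bigl[\log\bigl(\tfrac{1+\lambda_i(A')}{2}\bigr)-\tfrac12\log\lambda_i(A')\bigr]$. I would write $g(s):=\log(1+s)-\log 2-\tfrac12\log s$ and seek to express $g$ (up to additive constants and a term that is constant on the $\prec_E$-equivalence class, namely multiples of $\sum_i\log\lambda_i$ which is fixed since $e_n(A')=e_n(B')$) in the integral form required by Proposition~\ref{prop:psi}. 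The function $\log(1+s)$ is already of the admissible type (Dirac mass at $t=1$ in the second representation). The term $-\tfrac12\log s=-\tfrac12\log\lambda_i(A')$ sums to $-\tfrac12\log e_n(A')$, which equals $-\tfrac12\log e_n(B')$ by the equality case of $\prec_E$; similarly $-\log 2$ contributes a constant $-n\log 2$. Hence $\sum_i g(\lambda_i(A'))=\sum_i\log(1+\lambda_i(A'))-n\log 2-\tfrac12\log e_n(A')$, and E-monotonicity of $\sum_i\log(1+\lambda_i)$ (Proposition~\ref{prop:psi} with $\mu=\delta_1$, or equivalently Proposition~\ref{prop:trivial}) finishes the proof.

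The main obstacle I anticipate is not any single hard estimate but rather being careful about two bookkeeping points: first, verifying the congruence-invariance of $\delta_R$ and $\delta_S$ cleanly so that the reduction to $C=I$ is rigorous; and second, correctly exploiting the \emph{equality} $e_n(x)=e_n(y)$ in the definition of $\prec_E$ to discard the $-\tfrac12\log\det$ terms in $\delta_S$ — if one only had $\preceq_E$ this step would fail, so it is essential that the hypothesis is the strict dominance order $\prec_E$. Once those are in place, both inequalities follow in a couple of lines from the already-established results.
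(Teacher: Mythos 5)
Your proposal is correct and takes essentially the same route as the paper: \eqref{eq:11} follows from Corollary~\ref{cor:ssli} applied to $\lambda(AC^{-1})$, and \eqref{eq:12} from the E-monotonicity of $\sum_i\log(1+\lambda_i)$ together with $\det A=\det B$, which comes from the $e_n$-equality built into $\prec_E$. The only cosmetic difference is that you make the reduction to $C=I$ explicit via congruence invariance, whereas the paper works directly with the factorization $\det(A+C)=\det(C)\det(I+AC^{-1})$.
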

\begin{proof}
  Inequality~\eqref{eq:11} (for $C=I$) was first noted in~\cite{boNes15,borisov2017}. It follows readily from Corollary~\ref{cor:ssli} once we use~\eqref{eq:riem} and observe that
  \begin{equation*}
    \delta_R^2(A,C) = \frob{\log C^{-1/2}AC^{-1/2}}^2 = \nlsum_{i=1}^n(\log \lambda_i(AC^{-1}))^2.
  \end{equation*}
  To obtain~\eqref{eq:12}, first observe that $$\det(A+C)=\det(C)\det(I+AC^{-1})=\det(C)\nlprod_{i=1}^n(1+\lambda_i(AC^{-1})).$$ Thus, we have
  \begin{align*}
    \delta_S(A,C) &= \log\det(C) + \log\nlprod_{i=1}^n\tfrac{1+\lambda_i(AC^{-1})}{2}-\half\log\det(AC)\\
    &\le \log\det(C) + \log\nlprod_{i=1}^n\tfrac{1+\lambda_i(BC^{-1})}{2}-\half\log\det(AC)\\
    &=\log\det\pfrac{B+C}{2}-\half\log\det(BC)\\
    &=\delta_S(B,C),
  \end{align*}
  where the inequality  holds due to the hypothesis $\lambda(A) \prece \lambda(B)$, which also is used to conclude the second equality by using $\det(A)=\det(B)$.
\end{proof}

\subsection{Quantum Entropy}
The entropy inequalities~(\ref{eq:4})-\eqref{eq:6} also extend to their counterparts in quantum information theory. Specifically, recall that the quantum R\'enyi entropy of order $\alpha \in (0,1)\cup (1,\infty)$ is given by
\begin{equation}
  \label{eq:19}
  H_\alpha(X) := \frac{1}{1-\alpha}\log \frac{\trace X^\alpha}{\trace X},
\end{equation}
where $X$ is positive definite; moreover, one typically assumes the normalization $\trace X = 1$. Using an argument of the same form as used to prove Theorem~\ref{thm:renyi} we can obtain the following result for the R\'enyi entropy; we omit the details for brevity.
\begin{theorem}
  \label{thm:renyi2}
  Let $X$ and $Y$ be positive definite matrices with unit trace. Then,
  \begin{equation*}
    X \prec_E Y\quad\implies H_\alpha(X) \le H_\alpha(Y)\quad\text{for}\ \alpha \in [0,2].
  \end{equation*}
\end{theorem}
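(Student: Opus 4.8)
The plan is to reduce Theorem~\ref{thm:renyi2} to the scalar statement already proved in Theorem~\ref{thm:renyi}, by exploiting that $H_\alpha$ is a spectral function. Concretely, $\trace X^\alpha=\sum_{i=1}^n\lambda_i(X)^\alpha$ and $\trace X=\sum_{i=1}^n\lambda_i(X)$, so $H_\alpha(X)$ depends on $X$ only through the eigenvalue vector $\lambda(X)$; and by definition~\eqref{eq:16} the hypothesis $X\prec_E Y$ is literally $\lambda(X)\prec_E\lambda(Y)$. Since $X,Y>0$ with $\trace X=\trace Y=1$, both $\lambda(X)$ and $\lambda(Y)$ are strictly positive probability vectors, and the unit-trace normalization forces $e_1(\lambda(X))=\trace X=1=\trace Y=e_1(\lambda(Y))$ --- exactly the side condition $e_1(x)=e_1(y)$ used throughout the proof of Theorem~\ref{thm:renyi}. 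Moreover, with $\trace X=1$ one has $H_\alpha(X)=\frac{1}{1-\alpha}\log\bigl(\sum_i\lambda_i(X)^\alpha\bigr)$, i.e.\ the quantum R\'enyi entropy of $X$ coincides with the classical R\'enyi entropy of the probability vector $\lambda(X)$. Applying Theorem~\ref{thm:renyi} to $\lambda(X)\prec_E\lambda(Y)$ then delivers $H_\alpha(X)\le H_\alpha(Y)$ for $\alpha\in(0,1)\cup(1,2)$.

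Alternatively --- and this is presumably the ``argument of the same form'' the text alludes to --- I would rerun the proof of Theorem~\ref{thm:renyi} natively at the matrix level. From $\lambda(X)\prec_E\lambda(Y)$ and $e_k(\lambda(A))=\trace(\wedge^k A)$ (as in Section~\ref{sec:mtx}), the generating-function identities give $\det(I+tX)\le\det(I+tY)$ and $\det(tI+X)\le\det(tI+Y)$ for all $t\ge0$, the matrix analogues of~\eqref{eq:key1}--\eqref{eq:key2}. Taking logarithms and using $\log\det(I+tX)=\trace\log(I+tX)$ yields $\trace\log(I+tX)\le\trace\log(I+tY)$, and subtracting $t\,\trace X=t\,\trace Y$ also gives $\trace\log(I+tX)-t\,\trace X\le\trace\log(I+tY)-t\,\trace Y$. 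Integrating these against $t^{-\alpha-1}\,dt$ over $(0,\infty)$ and invoking the integral representations~\eqref{eq:7} (for $0<\alpha<1$) and~\eqref{eq:8} (for $1<\alpha<2$), together with $\trace X^\alpha=\trace\psi(X)$ for $\psi(s)=s^\alpha$, reproduces~\eqref{eq:4} and~\eqref{eq:5} with $x,y$ replaced by $\lambda(X),\lambda(Y)$; the sign of $\sin(\alpha\pi)$ and of $1-\alpha$ then gives the monotonicity of $H_\alpha$ in each range.

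The boundary values are handled exactly as in Theorem~\ref{thm:renyi}: $\alpha=0$ is trivial since $X>0$ gives $\trace X^0=n$, so $H_0\equiv\log n$; the case $\alpha=2$ follows by continuity as $\alpha\uparrow2$, or directly since $\sum_i\lambda_i^2=e_1^2-2e_2$ with $e_1$ fixed and $e_2$ nondecreasing under $\prec_E$; and the von Neumann limit $H_1(X)=-\trace X\log X$ is obtained by dividing the $1<\alpha<2$ inequality by $\alpha-1$ and letting $\alpha\to1$, using $\lim_{\alpha\to1}(\lambda_i^\alpha-\lambda_i)/(\alpha-1)=\lambda_i\log\lambda_i$.

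I do not expect a genuine obstacle here: the only point requiring care is that the quantum entropy carries the normalization by $\trace X$, which is harmless precisely because of the unit-trace hypothesis --- it is what collapses $H_\alpha(X)$ to the classical R\'enyi entropy of $\lambda(X)$ and what supplies the $e_1$-equality needed for~\eqref{eq:5}--\eqref{eq:6}. If one dropped the normalization, the subtraction of $t\,\trace X$ would no longer be innocuous and one would need a separate argument comparing $\trace X$ with $\trace Y$; I would flag that as the step where the present proof does not carry over verbatim.
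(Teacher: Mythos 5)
Your proof is correct and is precisely the ``argument of the same form'' the paper alludes to --- the paper in fact omits the details entirely, and your reduction to Theorem~\ref{thm:renyi} via the eigenvalue vector $\lambda(X)$, together with the observation that the unit-trace hypothesis supplies the side condition $e_1(\lambda(X))=e_1(\lambda(Y))=1$ needed for the $\alpha>1$ and $\alpha\to1$ cases, fills them in faithfully. Your closing remark about where the argument would break without the normalization is a correct and worthwhile observation.
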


\section{Subentropy} 
Next, we briefly discuss an important extension, namely, E-monotonicity of \emph{subentropy}, a quantity that has found use in physics~\cite{jozsa2015}. Formally,
\begin{equation}
  \label{eq:13}
  Q(x_1,\ldots,x_n) := -\sum_{i=1}^n\frac{x_i^n}{\nlprod_{j\neq i}(x_i-x_j)}\log x_i,
\end{equation}
defines a natural entropy-like quantity that characterizes a quantum state with eigenvalues $x_1,\ldots,x_n$ (thus $x \ge 0$ and $e_1(x)=1$). A main result in the work~\cite{jozsa2015} is the following monotonicity theorem for subentropy (rephrased in our notation):
\begin{theorem}[\cite{jozsa2015}]
  \label{thm:joz}
  If $x \preceeq y$ and $e_1(x)=e_1(y)=1$, then $Q(x) \le Q(y)$.   
\end{theorem}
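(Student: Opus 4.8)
The plan is to mimic the treatment of $L_n$ in Corollary~\ref{cor:ssli}: produce an integral representation of $Q$ built out of the factors $t+x_i$, and then invoke inequality~\eqref{eq:key2}. The starting point is the elementary formula $\log s=\int_0^\infty\bigl(\tfrac{1}{1+t}-\tfrac{1}{s+t}\bigr)\,dt$ for $s>0$. Substituting this into~\eqref{eq:13} and exchanging the \emph{finite} sum with the integral (legitimate since each $\int_0^\infty\bigl(\tfrac{1}{1+t}-\tfrac{1}{x_i+t}\bigr)\,dt$ converges) gives
\[
  Q(x)=-\int_0^\infty\!\left(\frac{1}{1+t}\sum_{i=1}^{n}\frac{x_i^{\,n}}{\prod_{j\neq i}(x_i-x_j)}-\sum_{i=1}^{n}\frac{x_i^{\,n}}{(x_i+t)\prod_{j\neq i}(x_i-x_j)}\right)dt .
\]
Two Lagrange-interpolation (divided-difference) identities then collapse the inner sums. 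First, the order-$(n-1)$ divided difference of $u\mapsto u^{n}$ at the nodes $x_1,\dots,x_n$ equals $x_1+\cdots+x_n=e_1(x)=1$. Second, writing $u^{n}=(u+t)q(u)+(-t)^{n}$ with $q$ monic of degree $n-1$, and combining the divided difference of $q$ (which is its leading coefficient, $1$) with the partial-fraction expansion $\sum_i\bigl[(x_i+t)\prod_{j\neq i}(x_i-x_j)\bigr]^{-1}=(-1)^{n-1}/\prod_i(t+x_i)$, one obtains $\sum_i x_i^{\,n}/\bigl[(x_i+t)\prod_{j\neq i}(x_i-x_j)\bigr]=1-t^{n}/\prod_i(t+x_i)$. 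Hence
\[
  Q(x)=\int_0^\infty\!\left(\frac{t}{1+t}-\frac{t^{n}}{\prod_{i=1}^{n}(t+x_i)}\right)dt ,
\]
an identity valid whenever $e_1(x)=1$: this normalization is exactly what makes the integrand $O(t^{-2})$ at infinity (the two terms have expansions $1-t^{-1}+\cdots$ and $1-e_1(x)t^{-1}+\cdots$), while near $t=0$ both terms vanish.

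With the representation in hand the theorem is immediate. If $x\preceeq y$, then $\prod_i(t+x_i)\le\prod_i(t+y_i)$ for every $t\ge0$ --- this is precisely~\eqref{eq:key2} --- so $t^{n}/\prod_i(t+x_i)\ge t^{n}/\prod_i(t+y_i)$, and subtracting the two representations gives
\[
  Q(y)-Q(x)=\int_0^\infty t^{n}\!\left(\frac{1}{\prod_{i=1}^{n}(t+x_i)}-\frac{1}{\prod_{i=1}^{n}(t+y_i)}\right)dt\;\ge\;0 .
\]
Both $Q(x)$ and $Q(y)$ are finite by the hypothesis $e_1(x)=e_1(y)=1$; formula~\eqref{eq:13} with coincident $x_i$'s is read, as usual, by continuity (equivalently, one may adopt the last displayed integral as the definition of $Q$ on the whole simplex, on which it is plainly continuous).

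Since the final step is a one-liner, the real content is producing the integral representation; the main obstacle is guessing the right $\log s$ identity to feed in and then handling the sign bookkeeping in the Lagrange-interpolation step, where $\prod_{j\neq i}(x_j-x_i)=(-1)^{n-1}\prod_{j\neq i}(x_i-x_j)$ and $(-t)^{n}(-1)^{n-1}=-t^{n}$ conspire to produce the clean remainder term $t^{n}/\prod_i(t+x_i)$. It is also worth noting precisely where $e_1=1$ enters --- only to make $Q$ and its representation convergent, not in the monotonicity step itself, which would go through for any $x\preceeq y$ for which both subentropies are finite.
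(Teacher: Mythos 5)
Your proof is correct and follows essentially the same route as the paper: both reduce the theorem to the integral representation~\eqref{eq:14} of $Q$ and then apply the monotonicity of $\prod_i(t+x_i)$ under $\preceeq_E$, i.e.\ inequality~\eqref{eq:key2}. The only difference is that the paper imports~\eqref{eq:14} from Jozsa and Mitchison~\cite{jozsa2015}, whereas you derive it yourself via divided differences; your derivation (including the sign bookkeeping and the observation that $e_1(x)=1$ is needed only for convergence of the representation) checks out.
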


\vskip5pt
\noindent Josza and Mitchison~\cite{jozsa2015} prove Theorem~\ref{thm:joz} by  appealing to an argument based on contour integration. We note below how a key identity derived by Josza and Mitchison already implies this theorem. Instead of the logarithmic representation of Prop.~\ref{prop:psi}, the key idea is to consider the representation
\begin{equation}
  \label{eq:1}
  \psi(x_1,\ldots,x_n) = \int_0^\infty h\left(\nlprod_{i=1}^n (t+x_i)\right)d\mu(t),
\end{equation}
where $h$ is any monotonically increasing function and $\mu$ is a nonnegative measure. Clearly, if $x \preceeq y$, then $h(\prod_i (t+x_i)) \le h(\prod_i(t+y_i))$, whereby $\psi(x) \le \psi(y)$. 

Therefore, to prove $Q(x)\le Q(y)$, we just need to find a function $h$ such that $Q$ can be expressed as~\eqref{eq:1}. Such a representation was already obtained in~\cite{jozsa2015}, wherein it is shown that for $x>0$ such that $e_1(x)=1$, we have
\begin{equation}
  \label{eq:14}
  Q(x_1,\ldots,x_n) = -\int_0^\infty \Bigl[\frac{t^n}{\nlprod_{j=1}^n(t+x_j)} - \frac{t}{1+t}\Bigr]dt.
\end{equation}
Thus, using $h(s)=-1/s$ and $d\mu(t)=t^ndt$, and adding $\frac{-t}{1+t}$ to ensure convergence (the constraint $e_1(x)=e_1(y)$ is needed to cancel out the effect of this term), we obtain $Q(x)\le Q(y)$ whenever $x \preceeq y$ and $e_1(x)=e_1(y)$.

A similar argument yields the following inequality, which is otherwise not obvious:
\begin{alignat}{2}
  \label{eq:9}
  x \prece y \implies \sum_{i=1}^n \frac{(-1)^{i+1}x_i^\alpha}{\nlprod_{j\neq i}(x_i-x_j)}
  &\quad\ge\quad
  \sum_{i=1}^n \frac{(-1)^{i+1}y_i^\alpha}{\nlprod_{j\neq i}(y_i-y_j)},\quad\text{for}\ 0 < \alpha < 1.
\end{alignat}

\bibliographystyle{abbrvnat}
\setlength{\bibsep}{3pt}

\begin{thebibliography}{14}
\providecommand{\natexlab}[1]{#1}
\providecommand{\url}[1]{\texttt{#1}}
\expandafter\ifx\csname urlstyle\endcsname\relax
  \providecommand{\doi}[1]{doi: #1}\else
  \providecommand{\doi}{doi: \begingroup \urlstyle{rm}\Url}\fi

\bibitem[Bhatia(1997)]{bhatia97}
R.~Bhatia.
\newblock \emph{{Matrix Analysis}}.
\newblock Springer, 1997.

\bibitem[Bhatia(2007)]{bhatia07}
R.~Bhatia.
\newblock \emph{{Positive Definite Matrices}}.
\newblock Princeton University Press, 2007.

\bibitem[B{\^{\i}}rsan et~al.(2013)B{\^{\i}}rsan, Neff, and Lankeit]{neff13}
M.~B{\^{\i}}rsan, P.~Neff, and J.~Lankeit.
\newblock Sum of squared logarithms -- an inequality relating positive definite
  matrices and their matrix logarithm.
\newblock \emph{Journal of Inequalities and Applications}, 2013.

\bibitem[Borisov et~al.(2015)Borisov, Neff, Sra, and Thiel]{boNes15}
L.~Borisov, P.~Neff, S.~Sra, and C.~Thiel.
\newblock {The sum of squared logarithms inequality in arbitrary dimensions}.
\newblock \emph{arXiv:1508.04039}, 2015.

\bibitem[Borisov et~al.(2017)Borisov, Neff, Sra, and Thiel]{borisov2017}
L.~Borisov, P.~Neff, S.~Sra, and C.~Thiel.
\newblock The sum of squared logarithms inequality in arbitrary dimensions.
\newblock \emph{Linear Algebra and its Applications}, 528:\penalty0 124--146,
  2017.

\bibitem[Jozsa and Mitchison(2015)]{jozsa2015}
R.~Jozsa and G.~Mitchison.
\newblock Symmetric polynomials in information theory: Entropy and subentropy.
\newblock \emph{Journal of Mathematical Physics}, 56\penalty0 (6), 2015.
\newblock \doi{http://dx.doi.org/10.1063/1.4922317}.

\bibitem[Marshall et~al.(2011)Marshall, Olkin, and Arnold]{marOlk}
A.~W. Marshall, I.~Olkin, and B.~C. Arnold.
\newblock \emph{{Inequalities: Theory of Majorization and its Applications}}.
\newblock Springer, second edition, 2011.

\bibitem[Neff(2015{\natexlab{a}})]{neff.private}
P.~Neff.
\newblock Private communication, Sep. 2015{\natexlab{a}}.
\newblock 18/9/2015.

\bibitem[Neff(2015{\natexlab{b}})]{ssli.mo}
P.~Neff.
\newblock The sum of squared logarithms conjecture.
\newblock MathOverflow, 2015{\natexlab{b}}.

\bibitem[Neff et~al.(2013)Neff, Eidel, Osterbrink, and Martin]{neff2012}
P.~Neff, B.~Eidel, F.~Osterbrink, and R.~Martin.
\newblock The {H}encky strain energy {$\|\log U\|^2$} measures the geodesic
  distance of the deformation gradient to {$\text{SO}(n)$} in the canonical
  left-invariant {R}iemannian metric on {$\text{GL}(n)$}.
\newblock \emph{Proceedings in Applied Mathematics and Mechanics}, 13\penalty0
  (1):\penalty0 369--370, 2013.

\bibitem[Neff et~al.(2014)Neff, Nakatsukasa, and Fischle]{yuji.ssli}
P.~Neff, Y.~Nakatsukasa, and A.~Fischle.
\newblock A logarithmic minimization property of the unitary polar factor in
  the spectral norm and the {F}robenius matrix norm.
\newblock \emph{SIAM Journal on Matrix Analysis and Applications}, 35\penalty0
  (3):\penalty0 1132--1154, 2014.

\bibitem[Pompe and Neff(2015)]{PompeNeff2015}
W.~Pompe and P.~Neff.
\newblock On the generalized sum of squared logarithms inequality.
\newblock \emph{Journal of Inequalities and Applications}, 2015.
\newblock available as arXiv:1410.2706.

\bibitem[{\v{S}}ilhav{\'y}(2015)]{silhavy15}
M.~{\v{S}}ilhav{\'y}.
\newblock A functional inequality related to analytic continuation.
\newblock 2015.
\newblock
  \textit{www.math.cas.cz/fichier/preprints/IM\_20150623102729\_44.pdf}.

\bibitem[Sra(2016)]{ssdiv}
S.~Sra.
\newblock {Positive definite matrices and the S-Divergence}.
\newblock \emph{Proceedings of the American Mathematical Society},
  144:\penalty0 2787--2797, 2016.

\end{thebibliography}

\end{document}